\newtheorem{theorem}{Theorem}
\newtheorem{lemma}{Lemma}
\newtheorem{example}{Example}
\frenchspacing \setlength{\parindent}{0pt}
\newcommand{\E}{{\mathbb E}}
\newcommand{\Var}{\textrm{Var}}
\newcommand{\Cov}{\textrm{Cov}}
\begin{document}

\title{Self-averaging sequences which fail to converge}

\author{\small{\textbf{Eric Cator, Henk Don}}\\ \small{Faculty of
	Science, P.O. Box 9010, 6500 GL Nijmegen, The Netherlands;}\\ \small{\emph{email}: e.cator@science.ru.nl, h.don@math.ru.nl }}
\maketitle

\begin{abstract}
We consider self-averaging sequences in which each term is a weighted average over previous terms. For several sequences of this kind it is known that they do not converge to a limit. These sequences share the property that $n$th term is mainly based on terms around a fixed fraction of $n$. We give a probabilistic interpretation to such sequences and give weak conditions under which it is natural to expect non-convergence. Our methods are illustrated by application to the group Russian roulette problem.\\
\\
\textbf{Keywords:} self-averaging, recursion, non-convergence, shooting problem.\\ 
\textbf{AMS subject classification}: 60F99. 
\end{abstract}

\section{Introduction}

Suppose $n\geq 2$ people want to select a loser by flipping coins: all of them flip their coin and those that flip heads are winners. The others continue flipping until there is a single loser. This problem and generalizations of it have been extensively studied, see \cite{athreya,brands,bruss,eisenberg,grabner,kalpathy, kinney,li,louchard,prodinger,rade}. 
If at some stage all players flip heads before a loser is selected, we say the process fails. It is known that the probability of failure does not converge as $n$ increases. This sequence of probabilities is what we call a \emph{self-averaging sequence}. A similar problem is the shooting problem or group Russian roulette problem, as described by Winkler \cite{winkler}. Here players do not flip coins, but fire a gun on another player. Again one could ask for the probability on one survivor. Analysis of this problem is harder, since survival of an individual depends on survival of the other players. Recently van de Brug, Kager and Meester \cite{meester} rigorously showed that also here the sequence of probabilities does not converge and they gave bounds for the liminf and limsup. 

In this paper, we put such problems into a mild probabilistic framework and explain why this phenomenon of non-convergence is not surprising. The fact that in each round about the same fraction $\alpha$ of the players survives is the key ingredient to get oscillation instead of convergence. In the loser selection problem and the shooting problem the fluctuations around the fixed fraction are of order $\sqrt{n}$. We demonstrate that non-convergence of a self-averaging sequence is natural to expect under a much weaker condition: if the fluctuations are of order strictly less than $n$, the sequence should be expected not to converge. Our main theorem gives a way to bound the limit inferior and limit superior of a self-averaging sequence. Particular details of the problem do not really play a role in these bounds. 

To illustrate our general results, we applied our methods to the group Russian roulette problem. We obtain quite sharp upper and lower bounds on the liminf and limsup of the sequence of probabilities.
Non-convergence of the sequence follows immediately from these bounds.

\section{Problem formulation and running example}\label{sec:problem}

\subsection{General setting}

We will consider bounded sequences $p(n), n\geq 0$ which are defined as follows. The first term(s) are assumed to be given as starting values and then each next term is obtained by taking some weighted average over previous terms. This is a deterministic definition, but nevertheless we will adopt a natural probabilistic interpretation. The weighted average can be seen as the expectation of some random variable. So we will study a sequence $p(n)$ which is given for $0\leq n\leq n_0$ and satisfies
\begin{equation}\label{eq:def_p(n)}
p(n) = \mathbb{E}[p(Y(n))],\qquad n>n_0,
\end{equation}
where $Y(n)\in\left\{0,1,\ldots,n-1\right\}$ are random variables depending on $n$. For convenience we define $Y(n)=n$ for $n\leq n_0$. Furthermore, we assume that the expectation of $Y(n)$ is close to a fixed fraction of $n$ and that the variance around this fraction is of order $n$ as well. 

More precisely, we assume that there exist constants $0\leq \alpha<1$ and
$\beta,\gamma,\delta\geq 0$ such that for all $n>n_0$ the random
variable $Y(n)$ satisfies
\begin{equation}\label{eq:cond_p(n)}
|\mathbb{E}[Y(n)]-\alpha n|\leq \beta,\quad\textrm{and}\quad
\textrm{Var}(Y(n))\leq \gamma n+\delta.
\end{equation}
Sections \ref{sec:recursion} and \ref{sec:subsequences} deal with this general problem. Section \ref{sec:shootout} discusses a specific example: group Russian roulette, which is explained below. In Section \ref{sec:variance} we show that the condition on the variance can be weakened even further.   

\subsection{Running example: group Russian roulette}

To demonstrate our methods, we apply them to the group Russian roulette problem. Suppose in a group of $n$ people, each is armed with a gun. They all uniformly at random select one of the others to shoot at and they all shoot simultaneously. The survivors continue playing this
`game' until either one or zero survivors are left. The probability that in the end no survivor
is left is called $p(n)$. One characteristic of this problem is that in each round about the same fraction survives. Indeed, the probability for an individual to survive is $(1-\frac{1}{n-1})^{n-1}\approx \frac{1}{e}$, so the expected number surviving the first round is about $\frac{n}{e}$.

This problem was recently studied by Van de Brug, Kager and Meester \cite{meester}, who showed that $\lim_{n\rightarrow\infty}p(n)$ does not exist. In the current paper we will show that this phenomenon is a natural thing to expect under the quite general conditions of (\ref{eq:def_p(n)}) and (\ref{eq:cond_p(n)}).

\section{Analysis for fixed $n$: recursions in terms of expectation and variance}\label{sec:recursion}

Fix $n$ and define a sequence of random variables by
\begin{equation}\label{eq:X_k}
X_k = \left\{\begin{array}{lll}n&&\textrm{if}\ k=0\\Y(X_{k-1})&&\textrm{otherwise}\end{array}\right..
\end{equation}
In the setting of group Russian roulette, this means that the starting population has size $n$ and that $X_k$ is the number of survivors after $k$ rounds of shooting. 
When we condition on $X_k$, the number of survivors after the ($k+1$)st round is expected to be close to $X_k/e$ and the variance is of order $X_k$ as well (the precise constants will be derived in Section \ref{sec:shootout}). One might therefore expect that the number of survivors after $k$ rounds is about $X_0/e^k$. The next lemma shows that this guess is correct for the general case if (\ref{eq:def_p(n)}) and (\ref{eq:cond_p(n)}) hold. 

\begin{lemma}\label{lemma2} Suppose we have a constant $X_0>0$, random variables $X_1,X_2,\ldots$ and constants $0< \alpha<1$ and $\beta\geq 0$ such
	that for $k\geq 0$
	$$
	|\mathbb{E}[X_{k+1}\mid X_k] - \alpha X_k|\leq \beta\qquad a.s.
	$$
	Then for all $k\geq 0$
	
	$$
	|\mathbb{E}[X_k]- \alpha^k X_0| \leq \frac{\beta}{1-\alpha}.
	$$
\end{lemma}

\textbf{Proof.} We will prove by induction the following stronger
statement:
\begin{equation}\label{eq:stronger}
|\mathbb{E}[X_k] - \alpha^k X_0|\leq
\beta\sum_{i=0}^{k-1}\alpha^i\quad\quad\textrm{for}\ k\geq 0.
\end{equation} For $k=0$, the statement (\ref{eq:stronger}) is true. Now suppose
it holds for some $k\geq 0$. Then
\begin{eqnarray*}
	|\mathbb{E}[X_{k+1}]-\alpha^{k+1}X_0| &=&
	|\mathbb{E}[\mathbb{E}[X_{k+1}\mid X_k]]-\alpha^{k+1}X_0|\\
	&\leq& |\mathbb{E}[\alpha X_k+\beta]-\alpha^{k+1}X_0|\\
	&=& |\alpha\mathbb{E}[X_k]+\beta-\alpha^{k+1}X_0|\\
	&\leq& \left|\alpha\left(\alpha^k
	X_0+\beta\sum_{i=0}^{k-1}\alpha^i\right) +\beta
	-\alpha^{k+1}X_0\right|\\
	&=& \beta\sum_{i=0}^k\alpha^i 
\end{eqnarray*}\hfill $\Box$

To get further grip on the sequence $(X_k)_{k\geq 0}$, we also investigate the variance of the terms. It turns out that also the variance basically scales down with a factor $\alpha$ in each round.

\begin{lemma}\label{lemma3} Suppose we have a constant $X_0>0$, random variables $X_1,X_2,\ldots$ and constants $0< \alpha<1$ and $\beta,\gamma,\delta\geq 0$ such
	that for $k\geq 0$
	\begin{eqnarray}\label{eq:assump_lemmavar}
	&&|\mathbb{E}[X_{k+1}\mid X_k] - \alpha X_k|\leq
	\beta\qquad a.s.\label{eq:assump1_lemmavar}\\
	&&\textrm{\em Var}(X_{k+1}\mid X_k)\leq
	\gamma X_k+\delta\qquad a.s.\label{eq:assump2_lemmavar}
	\end{eqnarray}
	Then there exist constants $C$ and $D$, independent of $X_0$, such that for all $k\geq 0$
	\begin{equation}\label{eq:statement_lemma3}
	\textrm{\em Var}(X_{k}) \leq
	C\alpha^{k}X_0+D.
	\end{equation}
\end{lemma}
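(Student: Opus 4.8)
The plan is to prove the variance bound by induction on $k$, using the law of total variance to split $\mathrm{Var}(X_{k+1})$ into a conditional-variance term and a term measuring the variance of the conditional mean. Concretely, I would write
\begin{equation*}
\mathrm{Var}(X_{k+1}) = \mathbb{E}\bigl[\mathrm{Var}(X_{k+1}\mid X_k)\bigr] + \mathrm{Var}\bigl(\mathbb{E}[X_{k+1}\mid X_k]\bigr).
\end{equation*}
The first term is immediately controlled by assumption (\ref{eq:assump2_lemmavar}): taking expectations of $\mathrm{Var}(X_{k+1}\mid X_k)\leq \gamma X_k+\delta$ gives a bound $\gamma\,\mathbb{E}[X_k]+\delta$, and since Lemma \ref{lemma2} already provides $\mathbb{E}[X_k]\leq \alpha^k X_0 + \beta/(1-\alpha)$, this contributes something of the desired form $C'\alpha^k X_0 + D'$.

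The more delicate term is $\mathrm{Var}(\mathbb{E}[X_{k+1}\mid X_k])$. The idea is that assumption (\ref{eq:assump1_lemmavar}) says $\mathbb{E}[X_{k+1}\mid X_k]$ lies within $\beta$ of $\alpha X_k$, so it should behave almost like $\alpha X_k$, whose variance is $\alpha^2\,\mathrm{Var}(X_k)$. To make this rigorous I would write $\mathbb{E}[X_{k+1}\mid X_k] = \alpha X_k + R_k$ where $|R_k|\leq\beta$ almost surely, and then use a bound of the form $\mathrm{Var}(U+R)\leq \mathrm{Var}(U) + 2\sqrt{\mathrm{Var}(U)\,\mathrm{Var}(R)} + \mathrm{Var}(R)$ (Cauchy--Schwarz on the covariance), or more simply a bound like $\mathrm{Var}(U+R)\leq (1+\epsilon)\mathrm{Var}(U) + (1+\epsilon^{-1})\mathrm{Var}(R)$. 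Since $\mathrm{Var}(R_k)\leq\beta^2$ is a bounded constant, and $\mathrm{Var}(\alpha X_k)=\alpha^2\mathrm{Var}(X_k)$, this produces a recursion of the shape
\begin{equation*}
\mathrm{Var}(X_{k+1}) \leq \alpha^2(1+\epsilon)\,\mathrm{Var}(X_k) + \bigl(\gamma\,\mathbb{E}[X_k]+\delta\bigr) + (1+\epsilon^{-1})\beta^2.
\end{equation*}

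With this recursion in hand, I would close the induction. The key point is that the multiplier $\alpha^2(1+\epsilon)$ on the previous variance can be kept strictly below $1$, and in fact below $\alpha$, by choosing $\epsilon>0$ small enough (this uses $\alpha<1$, so $\alpha^2<\alpha$ leaves room to spare). One then verifies that an ansatz $\mathrm{Var}(X_k)\leq C\alpha^k X_0 + D$ is self-reproducing: substituting it into the right-hand side, the $\alpha^k X_0$ terms combine with coefficient $\alpha^2(1+\epsilon)C + \gamma$ (using $\mathbb{E}[X_k]\approx\alpha^k X_0$), which must be $\leq \alpha C$, fixing a valid choice of $C$; the constant terms similarly pin down $D$. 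Crucially, both $C$ and $D$ depend only on $\alpha,\beta,\gamma,\delta,\epsilon$ and not on $X_0$, as required.

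The main obstacle is handling the cross term in $\mathrm{Var}(\mathbb{E}[X_{k+1}\mid X_k])$ cleanly: naively the geometric decay one wants is $\alpha^k$, but the natural contraction rate from the variance recursion is $\alpha^2$, and the perturbation $R_k$ forces the slightly inflated rate $\alpha^2(1+\epsilon)$. The work is in choosing $\epsilon$ so that $\alpha^2(1+\epsilon)<\alpha$ while still absorbing the additive $\gamma\,\mathbb{E}[X_k]$ contribution (which itself decays like $\alpha^k$) into the $C\alpha^k X_0$ term without destroying the induction. Once the constants are bookkept correctly this is routine, but getting the decay rate matched to $\alpha^k$ rather than the weaker $\alpha^{2k}$ or an inflated rate is where care is needed.
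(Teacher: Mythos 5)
Your proposal is correct and takes essentially the same route as the paper's proof: the identical law-of-total-variance split, the identical bound $\gamma\,\mathbb{E}[X_k]+\delta$ via Lemma \ref{lemma2} for the conditional-variance term, and the same perturbation argument writing $\mathbb{E}[X_{k+1}\mid X_k]=\alpha X_k+R_k$ with $|R_k|\leq\beta$, followed by an induction on the ansatz $C\alpha^k X_0+D$. Your Young-type inequality $\mathrm{Var}(U+R)\leq(1+\epsilon)\mathrm{Var}(U)+(1+\epsilon^{-1})\mathrm{Var}(R)$ is just a reparametrization of the paper's linearization $\left(\sqrt{x}+c\right)^2\leq\frac{K}{K-c^2}\,x+K$ (take $1+\epsilon=\frac{K}{K-c^2}$), and your condition $\alpha^2(1+\epsilon)<\alpha$ corresponds exactly to the paper's requirement $K>\frac{\beta^2}{\alpha^2-\alpha^3}$.
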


\textbf{Proof.} First we split the variance into two terms:
\begin{eqnarray}
\textrm{Var}(X_{k+1}) = \E[\Var(X_{k+1}\mid X_k)]+\Var(\E[X_{k+1}\mid X_k]).
\end{eqnarray}
For the first term, we use (\ref{eq:assump2_lemmavar}) and Lemma \ref{lemma2}:
\begin{eqnarray}
\E[\Var(X_{k+1}\mid X_k)] &\leq & \gamma\mathbb{E}[X_k]+\delta\nonumber\\
&\leq & \gamma\alpha^k X_0+\frac{\gamma\beta}{1-\alpha}+\delta.\label{eq:bound_firstterm}
\end{eqnarray}
For the second term, we use that (\ref{eq:assump1_lemmavar}) implies
\begin{equation}
\Var(\mathbb{E}[X_{k+1}\mid X_k] - \alpha X_k) \leq \E[(\mathbb{E}[X_{k+1}\mid X_k] - \alpha X_k)^2] \leq \beta^2.
\end{equation}
This gives
\begin{eqnarray}
&&\Var(\E[X_{k+1}\mid X_k])\ =\ \Var(\E[X_{k+1}\mid X_k]-\alpha X_k+\alpha X_k)\nonumber\\
&&\qquad\qquad =\ \Var(\mathbb{E}[X_{k+1}\mid X_k]-\alpha X_k)+\alpha^2\Var(X_k)+2\Cov(\E[X_{k+1}\mid X_k]-\alpha X_k,\alpha X_k)\nonumber\\
&&\qquad\qquad\leq\  \beta^2+\alpha^2\Var(X_k)+2\alpha\beta\sqrt{\Var(X_k)} = \alpha^2\left(\sqrt{\Var(X_k)}+\frac{\beta}{\alpha}\right)^2.
\end{eqnarray}
By elementary calculations, one can show that for all $x\geq 0$
\begin{equation}
\left(\sqrt{x}+c\right)^2 \leq \frac{K}{K-c^2}x+K,
\end{equation}
whenever $c$ is positive and $K>c^2$. This means that
\begin{eqnarray}\label{eq:bound_secondterm}
\Var(\E[X_{k+1}\mid X_k]) &\leq& \frac{\alpha^2K}{K-\frac{\beta^2}{\alpha^2}}\Var(X_k)+\alpha^2 K = \frac{\alpha^4K}{\alpha^2K-\beta^2}\Var(X_k)+\alpha^2K,
\end{eqnarray}
if $K > \beta^2/\alpha^2$. Now fix $k$ and assume that (\ref{eq:statement_lemma3}) holds true for this $k$. Using the bounds (\ref{eq:bound_firstterm}) and (\ref{eq:bound_secondterm}), we obtain
\begin{eqnarray}\label{eq:variance_k+1}
\textrm{Var}(X_{k+1}) &\leq & \left(\frac{\gamma}{\alpha}+C\frac{\alpha^3K}{\alpha^2K-\beta^2}\right)\alpha^{k+1}X_0+\left(\frac{\gamma\beta}{1-\alpha}+\delta+\alpha^2K+D\frac{\alpha^4K}{\alpha^2K-\beta^2}\right).
\end{eqnarray}
We want the constants in between brackets to be smaller than $C$ and $D$ respectively, i.e.
\begin{eqnarray}\label{eq:CD_inequalities}
\frac{\gamma}{\alpha}+C\frac{\alpha^3K}{\alpha^2K-\beta^2}\leq C,\qquad \frac{\gamma\beta}{1-\alpha}+\delta+\alpha^2K+D\frac{\alpha^4K}{\alpha^2K-\beta^2}\leq D
\end{eqnarray}
This can only be true if
\begin{equation}
\frac{\alpha^3K}{\alpha^2K-\beta^2} < 1\quad\textrm{and}\quad \frac{\alpha^4K}{\alpha^2K-\beta^2}<1.
\end{equation}
Since $\alpha<1$, the corresponding restriction on $K$ is $K>\frac{\beta^2}{\alpha^2-\alpha^3}$. If $K$ satisfies this inequality, then $K$ also exceeds $\beta^2/\alpha^2$ and (\ref{eq:CD_inequalities}) can be fulfilled by choosing $C$ and $D$ large enough. The minimal solutions are given by
\begin{eqnarray}\label{eq:constants}
C = \frac{\gamma\alpha^2K-\gamma\beta^2}{\alpha^3K-\alpha^4K-\alpha\beta^2},\qquad D = \frac{\left(\gamma\beta+\left(\delta+\alpha^2K\right)(1-\alpha)\right)\left(\alpha^2K-\beta^2\right)}{(1-\alpha)\left(\alpha^2K-\beta^2-\alpha^4K\right)}.
\end{eqnarray}
These solutions are both positive, so with this choice (\ref{eq:statement_lemma3}) holds for $k=0$. The inequalities (\ref{eq:variance_k+1}) and (\ref{eq:CD_inequalities}) complete a full inductive proof.\hfill $\Box$

\section{Bounds for subseqences of $p$}\label{sec:subsequences}

The previous section focussed on the random variables $X_k$ as defined in (\ref{eq:X_k}). Now we will use these results to study subsequences of $p$. For all $k\geq 1$ we obtain 
\begin{equation}
\E[p(X_k)] = \E[\E[p(Y(X_{k-1}))\mid X_{k-1}]] = \E[p(X_{k-1})],
\end{equation}
and hence for all $k\geq 0$
\begin{equation}\label{eq:expectation_p(X_k)}
\E[p(X_k)] = \E[p(X_0)]= p(X_0) = p(n).
\end{equation}
Since $X_k$ is expected to be close to $\alpha^kn$, we might hope that $p([\alpha^kn])$ is close to $p(n)$ if $p$ is smooth enough (where $[x] := \textrm{round}(x)$). We are interested in the limiting behavior if $n$ increases, so we will blow up $X_0$ by powers of $\alpha^{-1}$ and investigate the subsequence that emerges. Our main theorem is the following:

\begin{theorem}\label{theorem:main} Let $(p(n))_{n\in\mathbb{N}}$ be a sequence satisfying (\ref{eq:def_p(n)}) and (\ref{eq:cond_p(n)}). Choose $x\in\mathbb{R},x>0$ arbitrary and let $N_i = [\alpha^{-i}x]$. Then there exist intervals $I_0\subset I_1\subset I_2 \ldots$, centered at $x$ and having length of order $\sqrt{x}$, and a positive decreasing sequence $(q_k)_{k\in\mathbb{N}}$ for which $\sum_{k=0}^\infty q_k=1$ such that 
\begin{equation}\label{eq:main_theorem}
\sum_{k=0}^\infty q_k \min_{n\in \mathbb{N}\cap I_k}p(n) \leq p(N_i) \leq  \sum_{k=0}^\infty q_k \max_{n\in \mathbb{N}\cap I_k}p(n),
\end{equation}
for all $i\geq 0$.
\end{theorem}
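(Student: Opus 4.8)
The plan is to exploit the identity $p(N_i)=\E[p(X_i)]$, obtained from (\ref{eq:expectation_p(X_k)}) with $X_0=N_i$ and evaluated at the round $k=i$. The point of stopping at $k=i$ is that $X_i$ then concentrates near $x$: by Lemma \ref{lemma2}, $|\E[X_i]-\alpha^iN_i|\le\beta/(1-\alpha)$, and since $N_i=[\alpha^{-i}x]$ gives $|\alpha^iN_i-x|\le\tfrac12$, the bias $|\E[X_i]-x|$ is bounded by a constant $B$ \emph{uniformly in} $i$. Likewise Lemma \ref{lemma3} yields $\Var(X_i)\le C\alpha^iN_i+D\le Cx+D'$ with $D'=D+C/2$, again uniformly in $i$; so the standard deviation of $X_i$ is of order $\sqrt{x}$ for every $i$. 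This uniformity is exactly what will force the weights in (\ref{eq:main_theorem}) to be independent of $i$.

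Next I would fix the weights \emph{before} the intervals. Choose any positive decreasing sequence $(q_k)$ with $\sum_{k\ge0}q_k=1$ (a geometric sequence works), and set the tail sums $\tau_k=\sum_{j>k}q_j$, so that $\tau_{-1}=1$, $\tau_k\downarrow0$ and $q_k=\tau_{k-1}-\tau_k$. Writing $\sigma^2=Cx+D'$, define nested intervals $I_k=[x-R_k,\,x+R_k]$ with radius $R_k=B+\sigma/\sqrt{\tau_k}$. These are centered at $x$, nested (as $\tau_k$ decreases, $R_k$ increases), and of length $2R_k=O(\sqrt{x})$ for each $k$. The radii are chosen precisely so that Chebyshev's inequality, combined with the uniform bias and variance bounds above, gives the $i$-free tail estimate
\begin{equation*}
\mathbb{P}(X_i\notin I_k)\le \mathbb{P}\big(|X_i-\E[X_i]|>\sigma/\sqrt{\tau_k}\big)\le \frac{\Var(X_i)\,\tau_k}{\sigma^2}\le\tau_k\qquad\text{for all }i.
\end{equation*}

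To conclude I would peel off the annuli $A_k=I_k\setminus I_{k-1}$ (with $I_{-1}=\emptyset$) and write, using boundedness of $p$ for convergence,
\begin{equation*}
p(N_i)=\E[p(X_i)]=\sum_{k\ge0}\E\big[p(X_i)\,\mathbf{1}_{\{X_i\in A_k\}}\big].
\end{equation*}
Bounding each term from above by $M_k\,\mathbb{P}(X_i\in A_k)$ with $M_k=\max_{n\in\mathbb{N}\cap I_k}p(n)$, then summing by parts while using that $M_k$ is nondecreasing (the intervals are nested) together with $\mathbb{P}(X_i\notin I_{k-1})\le\tau_{k-1}$, the tail probabilities collapse into the coefficients $\tau_{k-1}-\tau_k=q_k$, yielding $p(N_i)\le\sum_k q_k M_k$. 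The symmetric computation with $m_k=\min_{n\in\mathbb{N}\cap I_k}p(n)$ (now nonincreasing, so the relevant inequalities reverse) gives the matching lower bound, which is (\ref{eq:main_theorem}).

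The hard part, and the whole reason the statement holds, is the uniformity in $i$: it is essential that the center $\alpha^iN_i$ stays within a bounded distance of $x$ and that $\alpha^iN_i\le x+\tfrac12$, so that the variance bound $C\alpha^iN_i+D$ does not grow with $i$. This is what lets me fix $(q_k)$ and the Chebyshev radii once and for all. The remaining points are routine: $(q_k)$ is positive and decreasing by the geometric choice, the summation by parts is justified because it telescopes bounded nonnegative terms, and there is a harmless technical caveat that each $\mathbb{N}\cap I_k$ be nonempty, which holds for all $k$ once $x$ is large enough and can be arranged trivially otherwise.
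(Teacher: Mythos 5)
Your proposal is correct, and it follows the same skeleton as the paper's proof: the identity $p(N_i)=\E[p(X_i)]$ with $X_0=N_i$ stopped at round $k=i$, Lemmas \ref{lemma2} and \ref{lemma3} to get bias and variance bounds uniform in $i$ (which, as you rightly stress, is the crux), then Chebyshev, an annular decomposition, and an Abel summation. The genuine difference is that you invert the construction. The paper fixes unit-width annuli $t+k<|Z_i-x|\leq t+k+1$ with $t=\tau+\frac{\beta}{1-\alpha}+\frac12$ and lets Chebyshev dictate the weights, arriving at $q_k=\frac{\tau^2}{(\tau+k)^2}-\frac{\tau^2}{(\tau+k+1)^2}$, which depend on $x$ through $\tau$; the summation-by-parts step is carried out there by adding and subtracting the Chebyshev bound inside the sum. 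You instead fix an arbitrary positive decreasing $(q_k)$ with $\sum_k q_k=1$ once and for all, and calibrate the radii $R_k=B+\sigma/\sqrt{\tau_k}$ so that Chebyshev delivers the tail bounds $\mathbb{P}(X_i\notin I_k)\leq\tau_k$. Your version buys universal, $x$-independent weights and a more transparent Abel step (monotone $M_k$, $m_k$ against tails dominated by $\tau_{k-1}$, with the boundary terms vanishing because $p$ is bounded and $\tau_k\to 0$), at the price of geometrically growing windows if $(q_k)$ is geometric; the paper's version keeps linearly growing windows with explicitly computable weights, which is precisely what is exploited for the sharp numerical bounds in Section \ref{sec:shootout}, where the tail is controlled by the uniform bounds on $p$. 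Both constructions satisfy the theorem as stated, since for each fixed $k$ the interval $I_k$ has length of order $\sqrt{x}$, and your nonemptiness caveat for $\mathbb{N}\cap I_k$ is in fact automatic here: $R_k\geq B\geq\frac12$ plus $\sigma>0$ forces each $I_k$ to have length exceeding $1$ and hence to contain an integer.
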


Informally speaking, the idea of this theorem is that the values of $p(n)$ close to $n=x$ can be used to bound a whole subsequence of $p$. The problem setting suggests that $p(n)$ is roughly $f(\log(n))$, where $f$ is some periodic function with period $\log(\alpha)$. The scale on which the ``periodic" fluctuations occur in $p$ grows with the same speed as $n$, and therefore faster than $\sqrt{n}$. So if $x$ is large, we might expect $p(n)$ to vary only a little bit around $p([x])$ if $|n-x|$ is of order $\sqrt{n}$. This would imply that the subsequence $p(N_i)$ stays close to $p([x])$. Taking $x$ in a local maximum of the sequence $p(n)$, we can use (\ref{eq:main_theorem}) to bound $\limsup_{n\rightarrow\infty}p(n)$ from below. Similarly, we will construct an upper bound for $\liminf_{n\rightarrow\infty}p(n)$.\\ 
\\
\begin{proof} Let $X_k$ be defined as before by $X_k = Y(X_{k-1})$ for $k\geq 1$. We will consider these random variables for $X_0 = N_i$, $i\geq 0$. Define
\begin{eqnarray*}
	Z_i = X_i\mid (X_0 = N_i),\quad \mu_i = \mathbb{E}[Z_i],\quad \sigma^2_i =
	\textrm{Var}(Z_i).
\end{eqnarray*}
Then for all $i\geq 1$, using Lemma \ref{lemma2} and Lemma \ref{lemma3}, and incorporating
the rounding error
\begin{eqnarray}
|\mu_i-x|&\leq& |\mu_i-\alpha^i N_i|+|\alpha^iN_i-x|\leq \frac{\beta}{1-\alpha}+\frac12\\
\sigma^2_i &\leq&
C\alpha^iN_i+D \leq
C(x+\frac{\alpha}{2})+D. \label{eq:upperbounds}
\end{eqnarray}
So all $Z_i$ have expectation close to $x$ and by (\ref{eq:expectation_p(X_k)}) we have $p(N_i) = \mathbb{E}[p(Z_i)]$.  

Our main tool to control the subsequence $(p(N_i))_{i=1}^\infty$
will be the following version of Chebyshev's inequality:
$$
\mathbb{P}(| X-\mu| \leq t) \geq 1-\frac{\sigma^2}{t^2},
$$
where $X$ is a random variable with expectation $\mu$ and variance
$\sigma^2$. The upper bound for all variances $\sigma_i^2$ as given in (\ref{eq:upperbounds})
will be denoted by $\tau^2$. Application of Chebyshev's inequality
leads to
\begin{eqnarray*}
	\mathbb{P}(|Z_i-x|\leq t) &\geq&\mathbb{P}\left(|Z_i-\mu_i|\leq
	t-\frac{\beta}{1-\alpha}-\frac12\right)\\
	&\geq& 1-\frac{\tau^2}{(t-\frac{\beta}{1-\alpha}-\frac12)^2}
\end{eqnarray*}
Choosing $t=\tau+\frac{\beta}{1-\alpha}+\frac12$ gives for all
$i,k\geq 1$
\begin{equation}\label{eq:chebyshev}
\mathbb{P}(|Z_i-x|\leq t+k) \geq 1-\frac{\tau^2}{(\tau+k)^2}.
\end{equation}
Now we are ready to bound $p(N_i) = \mathbb{E}[p(Z_i)]$ as
follows:
\begin{eqnarray}
	p(N_i) &=& \mathbb{E}[p(Z_i)\mid |Z_i-x|\leq
	t+1]\cdot\mathbb{P}(|Z_i-x|\leq t+1)\nonumber\\
	&&+\sum_{k=1}^\infty \mathbb{E}[p(Z_i)\mid t+k<|Z_i-x|\leq
	t+k+1]\cdot\mathbb{P}(t+k<|Z_i-x|\leq t+k+1)\nonumber\\
	&\geq& \left(\min_{|n-x|\leq
		t+1}p(n)\right)\cdot\mathbb{P}(|Z_i-x|\leq t+1)\nonumber\\
	&&+\sum_{k=1}^\infty \left(\min_{|n-x|\leq
		t+k+1}p(n)\right)\cdot\mathbb{P}(t+k<|Z_i-x|\leq t+k+1)\nonumber\\
	&=& \min_{|n-x|\leq t+1}p(n)\cdot\mathbb{P}(|Z_i-x|\leq
	t)+\sum_{k=0}^\infty \left(\min_{|n-x|\leq
		t+k+1}p(n)\right)\cdot\left(\frac{\tau^2}{(\tau+k)^2}-\frac{\tau^2}{(\tau+k+1)^2}\right)\nonumber\\
	&& +\sum_{k=0}^\infty \left(\min_{|n-x|\leq
		t+k+1}p(n)\right)\cdot\left(\mathbb{P}(t+k<|Z_i-x|\leq t+k+1)-\frac{\tau^2}{(\tau+k)^2}+\frac{\tau^2}{(\tau+k+1)^2}\right),\nonumber\\
		&&\label{eq:expression}
\end{eqnarray}
where the minima are taken over $\mathbb{N}$. Now observe that 
$$
0\leq \mathbb{P}(|Z_i-x|\leq
	t+k)-1+ \frac{\tau^2}{(\tau+k)^2}\leq \frac{\tau^2}{(\tau+k)^2}
	$$
by (\ref{eq:chebyshev}). Since the right hand side is summable, the last sum $S$ in (\ref{eq:expression}) can be written as
\begin{eqnarray*}
	S &=& \sum_{k=0}^\infty \left(\min_{|n-x|\leq
		t+k+1}p(n)\right)\cdot\left(\mathbb{P}(|Z_i-x|\leq
	t+k+1)-1+\frac{\tau^2}{(\tau+k+1)^2}\right)\\
	&& -\sum_{k=0}^\infty \left(\min_{|n-x|\leq
		t+k+1}p(n)\right)\cdot\left(\mathbb{P}(|Z_i-x|\leq
	t+k)-1+\frac{\tau^2}{(\tau+k)^2}\right)\\
	&=& -\min_{|n-x|\leq t+1}p(n)\cdot\mathbb{P}(|Z_i-x|\leq t) +\\
	&& \sum_{k=1}^\infty\left(\min_{|n-x|\leq t+k}p(n)-\min_{|n-x|\leq
		t+k+1}p(n)\right)\cdot\left(\mathbb{P}(|Z_i-x|\leq
	t+k)-1+\frac{\tau^2}{(\tau+k)^2}\right)\\
	&\geq& -\min_{|n-x|\leq t+1}p(n)\cdot\mathbb{P}(|Z_i-x|\leq t).
\end{eqnarray*}
It follows that
\begin{eqnarray}\label{eq:lower_bound}
	p(N_i) &\geq & \sum_{k=0}^\infty \left(\min_{|n-x|\leq
		t+k+1}p(n)\right)\cdot\left(\frac{\tau^2}{(\tau+k)^2}-\frac{\tau^2}{(\tau+k+1)^2}\right).
\end{eqnarray}
Replacing the minimum by a maximum gives an upper bound for
$p(N_i)$. Now for $k\geq 0$ let
$$
I_k = [x-(t+k+1),x+(t+k+1)], \qquad q_k=\frac{\tau^2}{(\tau+k)^2}-\frac{\tau^2}{(\tau+k+1)^2}.
$$ 
Note that $\tau^2$ is of order $x$, so that $t$ is of order $\sqrt{x}$. Furthermore, $(q_k)_{k\in\mathbb{N}}$ is decreasing and $\sum q_k=1$, proving the result.\end{proof}

This theorem can be used to find upper and lower bounds for the liminf and limsup of the sequence $p(n)$:

\begin{lemma}\label{lemma:infsupbounds} For $x\in\mathbb{R^+}$, choose a lower bound $l(x)$ and upper bound $u(x)$ for $(p([\alpha^{-i}x]))_{i\geq 0}$ as in Theorem \ref{theorem:main}. Then for all $x$ 
\begin{equation}\label{eq:bounds1}
\liminf_{n\rightarrow\infty} p(n)\leq u(x),\qquad\qquad \limsup_{n\rightarrow\infty}p(n)\geq l(x).
\end{equation}
Furthermore, if $x_0>0$, then 
\begin{equation}\label{eq:bounds2}
\liminf_{n\rightarrow\infty} p(n) \geq \inf_{x\in[x_0,\alpha^{-1}x_0]} l(x),\qquad\qquad \limsup_{n\rightarrow\infty}p(n)\leq \sup_{x\in[x_0,\alpha^{-1}x_0]} u(x).
\end{equation}
\end{lemma}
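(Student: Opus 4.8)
The plan is to split the four inequalities into two groups: the bounds in (\ref{eq:bounds1}), which follow almost immediately from Theorem \ref{theorem:main}, and the bounds in (\ref{eq:bounds2}), which need an extra covering argument. Throughout I would use that $N_i=[\alpha^{-i}x]\to\infty$ as $i\to\infty$ (since $0<\alpha<1$), so that $(p(N_i))_{i\geq 0}$ is a genuine subsequence of $(p(n))_n$ indexed by values tending to infinity, and that by the choice of $l(x),u(x)$ one has $l(x)\leq p(N_i)\leq u(x)$ for all $i\geq 0$.

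For (\ref{eq:bounds1}), fix $x>0$. Using that the limit inferior of a subsequence dominates that of the full sequence, and dually for the limit superior, I would write
$$
\liminf_{n\to\infty}p(n)\leq \liminf_{i\to\infty}p(N_i)\leq u(x),\qquad \limsup_{n\to\infty}p(n)\geq \limsup_{i\to\infty}p(N_i)\geq l(x).
$$
This settles (\ref{eq:bounds1}) for every $x$, and the argument is essentially just bookkeeping.

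The substance of the lemma is in (\ref{eq:bounds2}), where I must convert the per-$x$ subsequence bounds into a bound uniform over all large $n$. The key observation is that the dilated intervals $\alpha^{-i}[x_0,\alpha^{-1}x_0]=[\alpha^{-i}x_0,\alpha^{-(i+1)}x_0]$, $i\geq 0$, tile $[x_0,\infty)$, since consecutive ones share an endpoint. Hence for every integer $n\geq x_0$ there is an index $i\geq 0$ with $\alpha^i n\in[x_0,\alpha^{-1}x_0]$. Setting $x=\alpha^i n$, this $x$ lies in $[x_0,\alpha^{-1}x_0]$ and satisfies $[\alpha^{-i}x]=[n]=n$ exactly, because $n$ is an integer; thus $n=N_i$ is a term of the subsequence associated with this particular $x$. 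Theorem \ref{theorem:main} then gives $l(x)\leq p(n)\leq u(x)$, whence
$$
\inf_{y\in[x_0,\alpha^{-1}x_0]}l(y)\leq p(n)\leq \sup_{y\in[x_0,\alpha^{-1}x_0]}u(y)
$$
for every integer $n\geq x_0$. Taking the limit inferior and limit superior as $n\to\infty$ yields (\ref{eq:bounds2}).

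The one step I expect to require care is this realization of an arbitrary integer $n$ as a subsequence term $N_i$: it relies both on the tiling property of the dilated intervals and on the exactness of the rounding, which holds precisely because choosing $x=\alpha^i n$ makes $\alpha^{-i}x=n$ an integer, so no rounding error enters. Everything else reduces to the elementary comparison of liminf and limsup along subsequences.
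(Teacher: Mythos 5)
Your proposal is correct and follows essentially the same route as the paper: the subsequence comparison yields (\ref{eq:bounds1}), and for (\ref{eq:bounds2}) you rescale an arbitrary integer $n$ by a power of $\alpha$ into the window $[x_0,\alpha^{-1}x_0]$ and observe that $n=N_i$ for the choice $x=\alpha^i n$. Your explicit remark that no rounding error enters because $\alpha^{-i}x=n$ is an integer is exactly what the paper's terse ``by definition $l(\alpha^k n)\leq p(n)\leq u(\alpha^k n)$'' relies on.
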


\begin{proof}
For $x\in\mathbb{R}^+$ and $N_i = [\alpha^{-i}x]$, we have $l(x)\leq p(N_i)\leq u(x)$ for all $i$. This immediately gives the bounds in (\ref{eq:bounds1}). For the second statement, choose $n > \alpha^{-1}x_0$ arbitrary. Then there exists an integer $k\geq 1$ such that $\alpha^kn\in [x_0,\alpha^{-1}x_0]$. By definition $l(\alpha^kn)\leq p(n)\leq u(\alpha^kn)$. So for all $n > \alpha^{-1}x_0$, we have
$$
\inf_{x\in[x_0,\alpha^{-1}x_0]} l(x) \leq p(n) \leq \sup_{x\in[x_0,\alpha^{-1}x_0]} u(x),
$$  
which implies (\ref{eq:bounds2}).
\end{proof}

\section{Non-convergence in group Russian roulette}\label{sec:shootout}

In this section we will apply our methods to the group Russian roulette problem, as introduced in Section \ref{sec:problem}. We will see that it is quite straightforward to prove that the probability $p(n)$ to have no survivor in the end does not converge as the group size $n$ increases. 

We start by checking that the group Russian roulette problem indeed fits into our general framework (\ref{eq:def_p(n)}) and (\ref{eq:cond_p(n)}). The starting values are $p(0) = 1$ and $p(1) = 0$. Let $Y(n)$ be the number of survivors in a group of $n$ people after one round of shooting (with degenerate random variables $Y(0)=0$ and $Y(1)=1$). Then 
$$
p(n) = \sum_{k=0}^{n-1}\mathbb{P}(Y(n)=k)\cdot p(k) = \mathbb{E}[p(Y(n))].
$$
For $i=1,\ldots,n$, we define $I_i$ to be the indicator of the event
that individual $i$ survives the first round. We will calculate
the expectation $\mu_n$ and variance $\sigma_n^2$ of $Y(n)$ for $n\geq 2$.
\begin{eqnarray*}
\mu_n &=& \mathbb{E}\left[\sum_{i=1}^n I_i\right] = \sum_{i=1}^n \mathbb{P}(I_i = 1)\\
&=& n\left(1-\frac1{n-1}\right)^{n-1}.
\end{eqnarray*}
Next, we calculate the second moment of $Y(n)$.
\begin{eqnarray*}
\mathbb{E}\left[Y(n)^2\right] &=& \sum_{i=1}^n\mathbb{E}[I_i^2]+\sum_{i=1}^n\sum_{j=1,j\neq i}^n\mathbb{E}[I_iI_j]\\
&=& n\left(1-\frac1{n-1}\right)^{n-1}+(n^2-n)\mathbb{P}(I_1=I_2=1)\\
&=&
n\left(1-\frac1{n-1}\right)^{n-1}+(n^2-n)\left(1-\frac1{n-1}\right)^{2}\left(1-\frac2{n-1}\right)^{n-2}
\end{eqnarray*}
This gives
$$
\sigma_n^2 =
\mu_n-\mu_n^2+(n^2-n)\left(1-\frac1{n-1}\right)^{2}\left(1-\frac2{n-1}\right)^{n-2}.
$$
It can be shown that for all $n \geq 0$, 
$$
\left|\mu_n-\frac{n}{e}\right| \leq \frac{2}{e}\qquad
\textrm{and}\qquad
\sigma_n^2\leq \frac{e-2}{e^2}\cdot n+\frac{3-e}{2e^2},
$$
which means that we can choose the following constants in (\ref{eq:cond_p(n)}):
\begin{equation}
\alpha = \frac{1}{e},\qquad\beta = \frac{2}{e},\qquad\gamma = \frac{e-2}{e^2},
\qquad \delta = \frac{3-e}{2e^2}.
\end{equation}
Now we will compute the bounds of Lemma \ref{lemma:infsupbounds}. An expression for $l(x)$ is given in (\ref{eq:lower_bound}). To (approximately) calculate this function, we first need to find the constants $t,\tau$ and a range of values of $p(n)$. Values of $p(n)$ satisfy a recursive relation. 

\begin{figure}[h]
	\includegraphics[width = \textwidth]{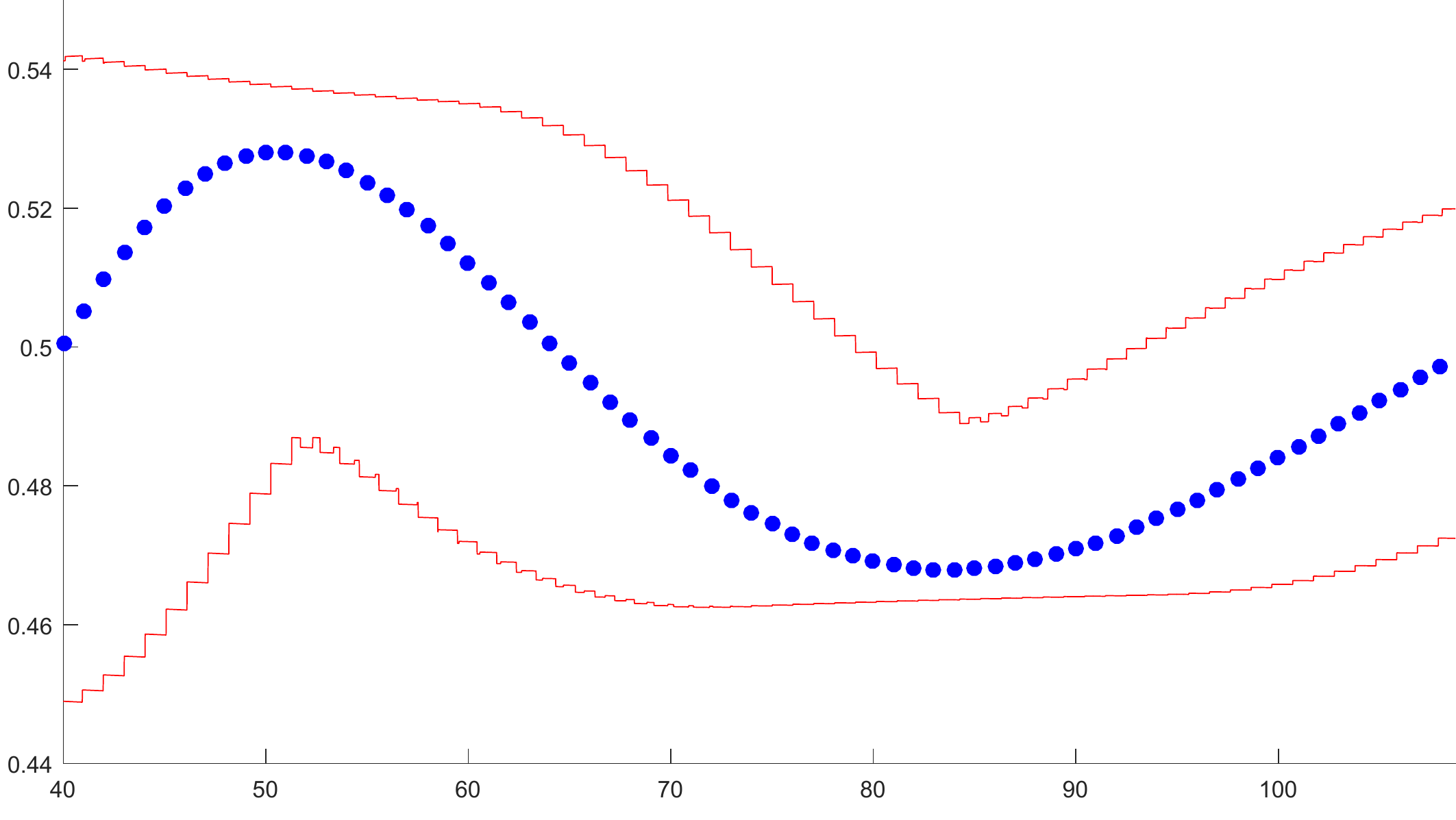}\caption{The values of $p(n)$ in one `period' (blue). For given $x$, the subsequence $p([\alpha^{-i}x]), i\geq 0$ stays between $l(x)$ and $u(x)$ (red).}\label{fig:shootout2}       
\end{figure}

Suppose we start with $n\geq 1$ people. Fix a subset of size
$1\leq k\leq n$ and denote the probability that exactly this
subset is killed in the first round by $q_{n,k}$. Then $q_{n,1} =
0$ and for $2\leq k \leq n$,
$$
q_{n,k} =
\left(\frac{k-1}{n-1}\right)^k\left(\frac{k}{n-1}\right)^{n-k}-\sum_{i=1}^{k-1}
\binom{k}{i}q_{n,i}.
$$
The recursion for $p(n)$ is the following:
\begin{equation}\label{eq:recursion_p}
p(n) = \sum_{k=0}^{n-2} \binom{n}{k}q_{n,n-k} p(k).
\end{equation}
We gratefully make use of the values for $p(n)$ as calculated by Van de Brug, Kager and Meester \cite{meester}. As a last ingredient, we calculate the constants $C$ and $D$ of Lemma \ref{lemma3} by equation (\ref{eq:constants}). Note that there is still the free parameter $K$ in the expressions for these constants, which should satisfy $K>\frac{\beta^2}{\alpha^2-\alpha^3} = \frac{4e}{e-1}\approx 6.33$. This constant can be used for fine-tuning of $C$ and $D$: increasing $K$ gives a smaller $C$ but a larger $D$. We  will choose $K=138$, because this appears to give the sharpest bounds in Lemma \ref{lemma:infsupbounds}. For given $x$, the terms in the sum in (\ref{eq:lower_bound}) can now be calculated explicitly, since $t$ and $\tau$ are determined by constants already known. A numerical lower bound for $(p(N_i))_{i\geq 0}$ is then obtained by performing this calculation for the first terms in the sum and bounding the tail by the uniform bounds $0\leq p(n)\leq 1$ for all $n$. An upper bound for $(p(N_i))_{i\geq 0}$ is calculated in an analogous way. 

As an illustration, we plotted $l(x)$ and $u(x)$ in Figure \ref{fig:shootout2} for $x\in[40,40e]$, which is one `period'. The sequence $p(n)$ itself is only defined on integers, but $l(x)$ and $u(x)$ are functions of a continuous variable. The discontinuities in these bounds are caused by a shifting window over which minima and maxima are taken in (\ref{eq:lower_bound}). 

To find bounds for the liminf and limsup of $p(n)$, we used the values of $p(n)$ as calculated by the recursion (\ref{eq:recursion_p}) in the range $0\leq n\leq 6000$. This results in the following theorem:

\begin{theorem}
	Let $p(n)$ be the probability that there are no survivors in the group Russian roulette problem with $n$ people. Then $\lim_{n\rightarrow\infty}p(n)$ does not exist. Moreover
	\begin{equation}
	0.4702 <\liminf_{n\rightarrow\infty}p(n) <0.4714\qquad\textrm{and}\qquad 0.5227 < \limsup_{n\rightarrow\infty} p(n) < 0.5237.
	\end{equation}
\end{theorem}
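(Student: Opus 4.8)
The plan is to verify that the group Russian roulette problem satisfies the hypotheses (\ref{eq:cond_p(n)}) with the explicit constants $\alpha=1/e$, $\beta=2/e$, $\gamma=(e-2)/e^2$, $\delta=(3-e)/(2e^2)$, and then to reduce the entire statement to a finite numerical computation via Lemma \ref{lemma:infsupbounds}. First I would confirm the two elementary estimates $|\mu_n-n/e|\leq 2/e$ and $\sigma_n^2\leq \frac{e-2}{e^2}n+\frac{3-e}{2e^2}$ from the closed forms already derived for $\mu_n$ and $\sigma_n^2$; these are one-variable inequalities in $n$ that can be checked by monotonicity and convexity together with a finite check for small $n$. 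With the constants in hand I would compute $C$ and $D$ from (\ref{eq:constants}) for the tuning value $K=138$, which fixes $\tau^2=C(x+\alpha/2)+D$ and hence $t=\tau+\beta/(1-\alpha)+\tfrac12$, the interval radii $t+k+1$, and the weights $q_k=\tau^2/(\tau+k)^2-\tau^2/(\tau+k+1)^2$ appearing in Theorem \ref{theorem:main}.

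The two inner inequalities, $\limsup_{n\to\infty} p(n)>0.5227$ and $\liminf_{n\to\infty} p(n)<0.4714$, I would obtain from (\ref{eq:bounds1}): for a single value $x$ chosen near a local maximum (respectively minimum) of the precomputed sequence $p(n)$, evaluate the lower bound $l(x)$ of (\ref{eq:lower_bound}) (respectively the corresponding upper bound $u(x)$) and read off the numerical value. The two outer inequalities, $\liminf_{n\to\infty} p(n)>0.4702$ and $\limsup_{n\to\infty} p(n)<0.5237$, are the substantive ones and rely on (\ref{eq:bounds2}): I would take one period $x\in[x_0,\alpha^{-1}x_0]$ (for instance $[40,40e]$) and compute $\inf_x l(x)$ and $\sup_x u(x)$ over that window, which by Lemma \ref{lemma:infsupbounds} bounds the liminf from below and the limsup from above uniformly over all large $n$.

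Each evaluation of $l(x)$ and $u(x)$ reduces to a finite computation. Using the values of $p(n)$ from the recursion (\ref{eq:recursion_p}) for $0\leq n\leq 6000$, I would sum the first $M$ terms of (\ref{eq:lower_bound}) exactly and control the tail rigorously: since $\min_{I_k}p\geq 0$, dropping the tail only decreases $l(x)$, so the truncated sum is already a valid lower bound; for $u(x)$ the tail is bounded by $\sum_{k>M}q_k=\tau^2/(\tau+M+1)^2$ using $\max_{I_k}p\leq 1$. Because $\tau$ is of order $\sqrt{x}$ while $p$ is known out to $n=6000$, one may take $M$ in the thousands and make the tail negligible, so the four thresholds are reached with room to spare. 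Non-convergence is then immediate: the bounds give $\liminf_{n\to\infty} p(n)<0.4714<0.5227<\limsup_{n\to\infty} p(n)$, so the limit cannot exist.

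I expect the main obstacle to be making the numerics genuinely rigorous rather than merely convincing, and two points need care. First, the recursion for $q_{n,k}$ is an inclusion--exclusion with large alternating terms, so computing $p(n)$ up to $n=6000$ with certified accuracy calls for exact rational arithmetic (or carefully tracked error bounds) to exclude catastrophic cancellation. Second, the functions $l(x)$ and $u(x)$ are only piecewise constant: the windows $I_k$ over which the minima and maxima are taken jump at the (half-)integer values of $x$, producing the discontinuities visible in Figure \ref{fig:shootout2}. To compute $\inf_x l(x)$ and $\sup_x u(x)$ over the full period rigorously one must enumerate these finitely many breakpoints and evaluate on each constant piece, rather than sampling $x$ on a grid and hoping to capture the extrema.
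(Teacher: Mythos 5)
Your proposal matches the paper's own proof essentially step for step: verifying the constants $\alpha=1/e$, $\beta=2/e$, $\gamma=(e-2)/e^2$, $\delta=(3-e)/(2e^2)$, computing $C$ and $D$ from (\ref{eq:constants}) with $K=138$, evaluating $l(x)$ and $u(x)$ from the truncated sum in (\ref{eq:lower_bound}) with the tail controlled by $0\leq p(n)\leq 1$ and the values of $p(n)$ from the recursion (\ref{eq:recursion_p}) up to $n=6000$, and then applying (\ref{eq:bounds1}) for the inner inequalities and (\ref{eq:bounds2}) over one period for the outer ones. Your additional remarks on certified arithmetic and on enumerating the breakpoints of the piecewise-constant bounds are sensible refinements of the same computation, not a different argument.
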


Figure \ref{fig:shootout} illustrates this result. The blue curve gives values of $p(n)$. The red curves are the bounds $l(x)$ and $u(x)$. For a fixed value of $x$, these curves give an interval containing all terms of the sequence $(p([\alpha^{-i}x]))_{i\geq 0}$. In particular, $\limsup_{n\rightarrow\infty}p(n)$ is bounded from below by the maximum of the lower red curve. Also each local maximum of the upper red curve is an upper bound for $\limsup_{n\rightarrow\infty}p(n)$, as is proved in Lemma \ref{lemma:infsupbounds}. Similar statements hold for $\liminf_{n\rightarrow\infty}p(n)$. The two horizontal lines indicate a band which will be left infinitely many times on both sides by values of $p(n)$. In \cite{meester}, it was shown that $\liminf_{n\rightarrow\infty}p(n) \leq 0.477487$ and $\limsup_{n\rightarrow\infty}p(n) \geq 0.515383$. So our bounds are an improvement over the results in \cite{meester}, despite the fact that our method does not rely on particular details of the group Russian roulette problem.

\begin{figure}[h]
	\includegraphics[width = \textwidth]{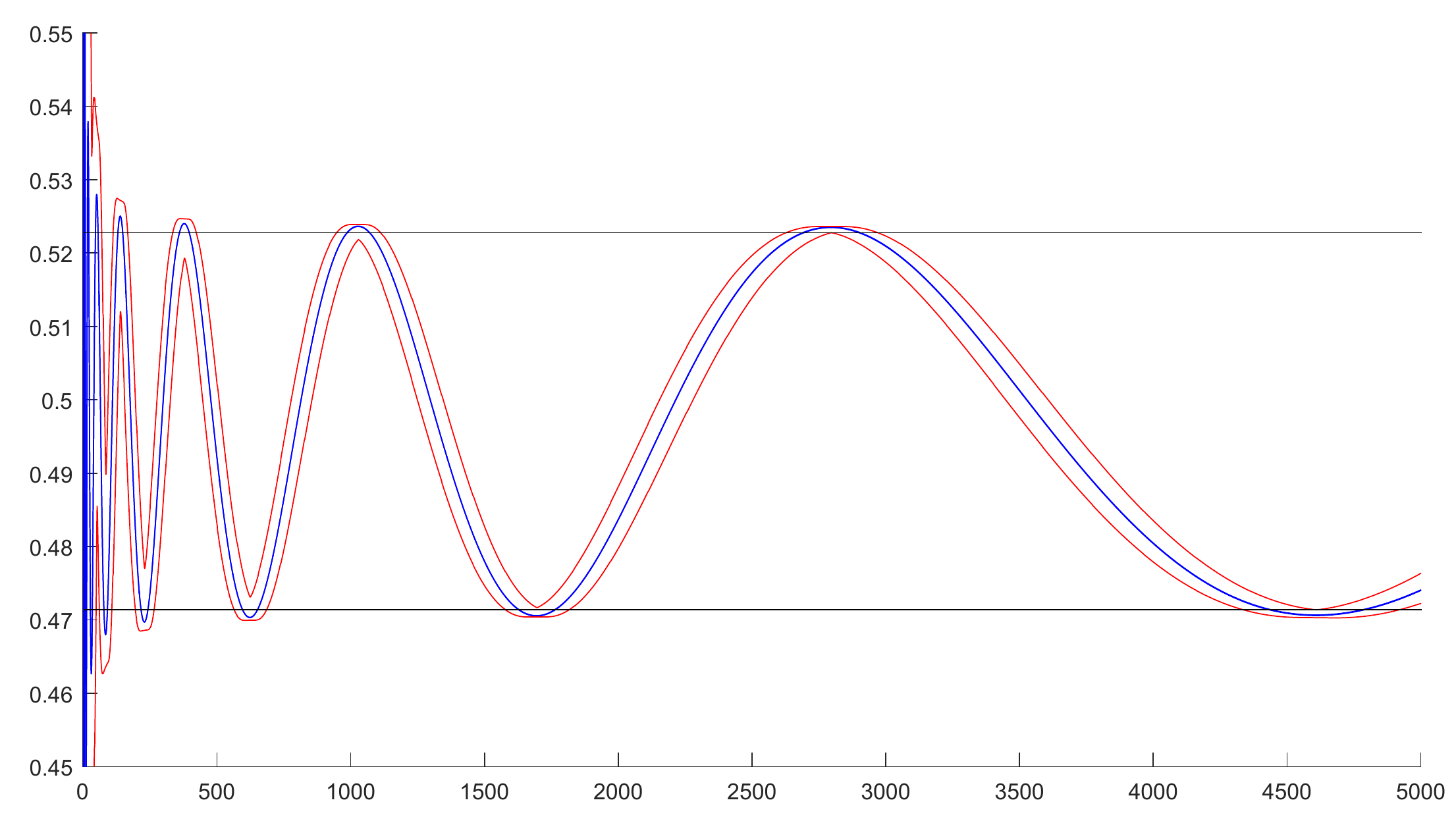}\caption{Plot of $p(n)$. Each vertical interval between the red curves contains an infinite subsequence. Horizontal lines indicate a minimal gap between $\limsup_{n\rightarrow\infty}p(n)$ and $\liminf_{n\rightarrow\infty}p(n)$.}\label{fig:shootout}       
\end{figure}

\section{Changing the order of the variance}\label{sec:variance}

In the setting of our problem, we assumed that the variance of $Y(n)$ is of order at most $n$, see (\ref{eq:cond_p(n)}). In fact, the phenomenon of non-convergence can even occur if the variance is of order $n^p$ with $p<2$ as the following generalization of Lemma \ref{lemma3} shows. That the ideas still work is not really surprising, since for $p<2$ the scale of the fluctuations in $Y(n)$ is still smaller than the scale of the periodic fluctuations in the sequence $(p(n))_{n\geq 0}$. If the power $p$ gets closer to $2$, the constants get worse, but the whole idea of subsequences which might have different limits essentially does not change. 

\begin{lemma}\label{lemma3_generalized} Suppose we have a constant $X_0>0$, random variables $X_1,X_2,\ldots$ and constants $0< \alpha<1$ and $\beta,\gamma,\delta\geq 0$ such
	that for $k\geq 0$
	\begin{eqnarray}\label{eq:assump_lemmavar_generalized}
	&&|\mathbb{E}[X_{k+1}\mid X_k] - \alpha X_k|\leq
	\beta\qquad a.s.\label{eq:assump1_lemmavar_generalized}\\
	&&\textrm{\em Var}(X_{k+1}\mid X_k)\leq
	\gamma X_k^p+\delta\qquad a.s.\label{eq:assump2_lemmavar_generalized}
	\end{eqnarray}
	for some $p<2$. Then there exist constants $C$ and $D$, independent of $X_0$, such that for all $k\geq 0$
	\begin{equation}\label{eq:statement_lemma3_generalized}
	\textrm{\em Var}(X_{k}) \leq
	C\alpha^{kp}X_0^p+D.
	\end{equation}
\end{lemma}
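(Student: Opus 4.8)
The plan is to run the same induction as in the proof of Lemma~\ref{lemma3}, the only genuinely new ingredient being the control of the fractional moment $\E[X_k^p]$ that now appears. I would again start from
$$\Var(X_{k+1}) = \E[\Var(X_{k+1}\mid X_k)] + \Var(\E[X_{k+1}\mid X_k])$$
and treat the two summands separately. Since the hypothesis (\ref{eq:assump1_lemmavar_generalized}) on the conditional mean is unchanged, the second summand is handled verbatim as before: the covariance estimate together with the elementary inequality $(\sqrt{x}+c)^2\le \tfrac{K}{K-c^2}x+K$ gives
$$\Var(\E[X_{k+1}\mid X_k]) \le \frac{\alpha^4 K}{\alpha^2 K-\beta^2}\,\Var(X_k)+\alpha^2 K$$
as soon as $K>\beta^2/\alpha^2$.

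For the first summand, (\ref{eq:assump2_lemmavar_generalized}) yields $\E[\Var(X_{k+1}\mid X_k)]\le \gamma\,\E[X_k^p]+\delta$, so the crux is to bound $\E[X_k^p]$ in the form $(\text{const})\,\alpha^{kp}X_0^p+(\text{const})$. Because $p/2\le 1$, Jensen's inequality for the concave map $t\mapsto t^{p/2}$ gives $\E[X_k^p]\le(\E[X_k^2])^{p/2}$, and $\E[X_k^2]=\Var(X_k)+\E[X_k]^2$. Inserting the induction hypothesis (\ref{eq:statement_lemma3_generalized}) for $\Var(X_k)$ together with $\E[X_k]\le \alpha^k X_0+\beta/(1-\alpha)$ from Lemma~\ref{lemma2}, writing $a=\alpha^k X_0$, and using the crude bounds $a^p\le a^2+1$ and $2ab\le a^2+b^2$, I would obtain $\E[X_k^2]\le (C+2)a^2+E_0$ for an explicit constant $E_0$ that is affine in $C$ and $D$. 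The subadditivity $(u+v)^{p/2}\le u^{p/2}+v^{p/2}$ then produces
$$\E[X_k^p]\le (C+2)^{p/2}\,\alpha^{kp}X_0^p + E_0^{\,p/2}.$$
(For $p\le 1$ one may shortcut this using $\E[X_k^p]\le(\E[X_k])^p$ directly, but the second-moment route covers all $0\le p<2$ uniformly.)

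Combining the two summands, the coefficient of $\alpha^{kp}X_0^p$ in $\Var(X_{k+1})$ is $\gamma(C+2)^{p/2}+\frac{\alpha^4 K}{\alpha^2 K-\beta^2}C$ and the rest is a constant. To close the induction in the target form $\Var(X_{k+1})\le C\alpha^{(k+1)p}X_0^p+D$, I would use $\alpha^p\cdot\alpha^{kp}X_0^p=\alpha^{(k+1)p}X_0^p$ and impose
$$\gamma(C+2)^{p/2}+\frac{\alpha^4 K}{\alpha^2 K-\beta^2}\,C\le C\alpha^p, \qquad \gamma E_0^{\,p/2}+\delta+\alpha^2 K+\frac{\alpha^4 K}{\alpha^2 K-\beta^2}\,D\le D.$$
The base case $k=0$ is immediate, since $\Var(X_0)=0\le C X_0^p+D$.

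I expect the main obstacle to be verifying that these two inequalities are simultaneously solvable. The decisive condition is the contraction requirement $\frac{\alpha^4 K}{\alpha^2 K-\beta^2}<\alpha^p$, equivalent to $K>\frac{\beta^2}{\alpha^2(1-\alpha^{2-p})}$; the denominator is positive precisely because $p<2$ forces $\alpha^{2-p}<1$, and this is exactly where the hypothesis $p<2$ enters quantitatively (and why the constants deteriorate as $p\uparrow 2$). Once $K$ is fixed above this threshold, the first inequality is solvable because its left side grows like $C^{p/2}$, sublinearly in $C$, while its right side grows linearly; fixing such a $C$, the second inequality is then solvable in $D$ for the same reason, since $\frac{\alpha^4 K}{\alpha^2 K-\beta^2}<1$ leaves a positive linear margin in $D$ while $E_0^{\,p/2}$ grows only like $D^{p/2}$. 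The one caveat worth stating explicitly is that $\E[X_k^p]$ presupposes $X_k\ge 0$, which is implicit in hypothesis (\ref{eq:assump2_lemmavar_generalized}).
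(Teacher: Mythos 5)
Your proposal is correct and takes essentially the same route as the paper's proof: the identical variance decomposition, the identical treatment of $\Var(\E[X_{k+1}\mid X_k])$ carried over from Lemma \ref{lemma3}, the same Jensen step $\E[X_k^p]\le\left(\E[X_k^2]\right)^{p/2}$ combined with Lemma \ref{lemma2} and the induction hypothesis, and the same contraction threshold, since your condition $K>\frac{\beta^2}{\alpha^2(1-\alpha^{2-p})}$ coincides with the paper's $K>\frac{\beta^2}{\alpha^2-\alpha^{4-p}}$, with the induction closed in both cases by the observation that the left-hand sides grow only like $C^{p/2}$ and $D^{p/2}$ because $p<2$. The only differences are which elementary inequalities are used to split the $p/2$-power (you use $2ab\le a^2+b^2$ and $(u+v)^{p/2}\le u^{p/2}+v^{p/2}$, the paper uses subadditivity of the square root and $(x+y)^p\le 2x^p+2y^p$), which affects nothing but the explicit constants.
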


\textbf{Proof.} The scheme of the proof of Lemma \ref{lemma3} still works, it is only extended by some extra technical details. We start by bounding $\mathbb{E}[\textrm{Var}(X_{k+1}\mid X_k)]$, using Lemma \ref{lemma2} and Jensen's inequality for concave functions ($\frac{p}{2}<1$): 
\begin{eqnarray*}
	\mathbb{E}[\textrm{Var}(X_{k+1}\mid X_k)] &\leq&\gamma\E[X_k^p]+\delta \leq \gamma\left(\E[X_k^2]\right)^{\frac{p}{2}}+\delta = \gamma\left(\textrm{Var}(X_k)+\E[X_k]^2\right)^{\frac{p}{2}}+\delta\\
	&\leq& \gamma\left(\textrm{Var}(X_k)+\left(\alpha^kX_0+\frac{\beta}{1-\alpha}\right)^2\right)^{\frac{p}{2}}+\delta. 
\end{eqnarray*}
Assuming that the induction hypothesis (\ref{eq:statement_lemma3_generalized}) holds for some fixed $k$, we can further bound this as follows:
\begin{eqnarray}
\mathbb{E}[\textrm{Var}(X_{k+1}\mid X_k)] &\leq& \gamma\left(C\alpha^{kp}X_0^p+D+\left(\alpha^kX_0+\frac{\beta}{1-\alpha}\right)^2\right)^{\frac{p}{2}}+\delta \nonumber\\
&\leq& \gamma\left(\left(\sqrt{C}\alpha^kX_0\right)^2+\sqrt{C+D}^2+\left(\alpha^kX_0+\frac{\beta}{1-\alpha}\right)^2\right)^{\frac{p}{2}}+\delta \nonumber\\
&\leq& \gamma\left(\left(1+\sqrt{C}\right)\alpha^kX_0+\sqrt{C+D}+\frac{\beta}{1-\alpha}\right)^p+\delta \nonumber\\
&\leq& 2\gamma\left(1+\sqrt{C}\right)^p\alpha^{pk}X_0^p+2\gamma\left(\sqrt{C+D}+\frac{\beta}{1-\alpha}\right)^p+\delta. \label{eq_bound_first_term}
\end{eqnarray}
Here we have used that $(\alpha^kX_0)^p\leq (\alpha^kX_0)^2+1$ and $(x+y)^p\leq 2x^p+2y^p$ for $x,y\geq 0$. For the term $\textrm{Var}(\E[X_{k+1}\mid X_k])$, we obtain the bound of (\ref{eq:bound_secondterm}), after which the induction hypothesis (\ref{eq:statement_lemma3_generalized}) gives
\begin{eqnarray}\label{eq_bound_second_term}
\textrm{Var}(\E[X_{k+1}\mid X_k]) &\leq& \frac{\alpha^4K}{\alpha^2K-\beta^2}\Var(X_k)+\alpha^2K \leq \frac{\alpha^4K}{\alpha^2K-\beta^2}\left(C\alpha^{kp}X_0^p+D\right)+\alpha^2K,
\end{eqnarray}
whenever $K > \beta^2/\alpha^2$. Combining the bounds (\ref{eq_bound_first_term}) and (\ref{eq_bound_second_term}) leads to
\begin{eqnarray}\label{eq:variance_k+1_generalized}
\textrm{Var}(X_{k+1}) &\leq & \left(2\gamma\alpha^{-p}\left(1+\sqrt{C}\right)^p+\frac{\alpha^{4-p}K}{\alpha^2K-\beta^2}\cdot C\right)\alpha^{p(k+1)}X_0^p\nonumber\\
&&+\left(2\gamma\left(\sqrt{C+D}+\frac{\beta}{1-\alpha}\right)^p+\delta+\alpha^2K+\frac{\alpha^4K}{\alpha^2K-\beta^2}\cdot D\right).
\end{eqnarray}
To complete the proof, this needs to be smaller than $C\alpha^{p(k+1)}X_0^p+D$. For this to be true, we require that
\begin{equation}
\frac{\alpha^{4-p}K}{\alpha^2K-\beta^2} < 1\quad\textrm{and}\quad \frac{\alpha^4K}{\alpha^2K-\beta^2}<1,
\end{equation} 
which can be achieved by choosing $K>\frac{\beta^2}{\alpha^2-\alpha^{4-p}}$. Now since $p<2$ we can first choose $C$ and $D$ (both independent of $k$) large enough such that the upper bound in (\ref{eq:variance_k+1_generalized}) is indeed smaller than $C\alpha^{p(k+1)}X_0^p+D$. This finishes the inductive proof. \hfill $\Box$\\ 
\\
With this lemma a statement analogous to Theorem \ref{theorem:main} can be proved in the same way for the case when the variance of $Y(n)$ is of order $n^p, p<2$.

\section{Conclusions and remarks}

We have studied sequences $p(n)$ characterized by the property that each term is a weighted average over previous terms. In several examples in the literature, such sequences do not converge to a limit, which at first sight might be surprising. The main purpose of this paper is to demonstrate that it is natural to expect non-convergence if the largest weights in the average $p(n)$ are given to values $p(k)$ for which $k$ is close to a fixed fraction of $n$. It turns out that non-convergence is predictable or even inevitable under fairly weak conditions. The intuition is that fluctuations in $p$ happen on a large scale, and if the averages are taken on a smaller scale, they can not let the fluctuations vanish. Our methods are illustrated by proving non-convergence for the group Russian roulette problem. 

Another question one could ask is if $p(n)$ converges in the sense that there exists a periodic function $f:\mathbb{R}\rightarrow\mathbb{R}$ with period $1$ such that 
\begin{equation}\label{eq:periodic}
\lim_{x\rightarrow\infty} |p([\alpha^{-x}])-f(x)| = 0.
\end{equation}
As is shown in \cite{meester}, such a function exists in the case of group Russian roulette. However, the setting of (\ref{eq:def_p(n)}) and (\ref{eq:cond_p(n)}) is not sufficient to prove such convergence, as the following example demonstrates. This means that one would need stronger assumptions on the random variables $Y(n)$. We believe that for proving (\ref{eq:periodic}), a suitable requirement could be that the total variation distance between $Y(n)$ and $Y(n+1)$ goes to zero as $n$ increases. However, proving this goes beyond the scope of the current paper.
\begin{example}
{\em Let $p(0)=0$, $p(1)=1$ and define $Y(n),n\geq 2$ by
\begin{equation}
Y(n) = \left\{\begin{array}{ll}
2\cdot \textrm{Bin}\left(\frac12n,\frac12\right)&\qquad n\textrm{\ even}\\
2\cdot \textrm{Bin}\left(\frac12(n-1),\frac12\right)+1&\qquad n\textrm{\ odd}.
\end{array}\right.
\end{equation}
Then
\begin{equation}
\left|\mathbb{E}[Y(n)]-\frac12 n\right|\leq \frac12,\qquad\textrm{Var}(Y(n))\leq \frac12 n.
\end{equation}
Letting $p(n) = \mathbb{E}[p(Y(n))]$ gives a sequence fitting the framework of (\ref{eq:def_p(n)}) and (\ref{eq:cond_p(n)}). However, $Y(n)$ is even for $n$ even and odd for $n$ odd. This implies that  
\begin{equation}
p(n) = \left\{\begin{array}{ll}
0&\qquad n\textrm{\ even}\\
1&\qquad n\textrm{\ odd}.
\end{array}\right.
\end{equation}
In this case the function $p([x])$ is periodic, but $p([2^x])$ clearly is not periodic.\hfill $\blacksquare$
}\end{example}

As a final remark, we note that our methods also apply to a continous setting where $g:(0,\infty)\rightarrow\mathbb{R}$ is an absolutely bounded function and where $g(x)$ is given for $x\leq x_0$. In this case the recursion is of the form $g(x) = \mathbb{E}[g(N_x)],x>x_0$, where $N_x$ is a random variable supported on $(0,x)$.

\section*{Acknowledgements} We are obliged to Tim van de Brug, Wouter Kager and Ronald Meester for kindly providing the results of their calculations.

\end{document}